\newtheorem {theorem}{Theorem}[section]
\newtheorem {lemma}[theorem]{Lemma}
\newtheorem {corollary}[theorem]{Corollary}
\theoremstyle{remark}
\newtheorem {remark}{Remark}[section]
\theoremstyle{problem}
\newtheorem {definition}[theorem]{Definition}
\theoremstyle{plain} \numberwithin {equation}{section}
\def\be{\begin{equation}}
\def\ee{\end{equation}}
\def\ba{\begin{aligned}}
\def\ea{\end{aligned}}
\def\g{\nabla}
\def\lap{\Delta}
\def\O{\Omega}
\def\R{\mathbb{R}}
\def\p{\partial}
\def\a{\alpha}
\def\f{\frac}
\def\th{\theta}
\def\p{\partial}
\def\div{\mathrm{div}}
\def\ld{\lambda}
\def\k{\kappa}
\def\spt{\mathrm{supp}}
\def\ra{\rightarrow}
\def\endproof{{\hfill$\Box$}\\}
\def\ch#1{\mathrm{cl}({#1})}
\def\dist{\mathrm{dist}}
\def\T{\mathbb{T}}
\begin{document}

\vspace{1cm}
\title{On blowup of classical solutions to the compressible Navier-Stokes equations}
\author{Zhouping XIN, Wei YAN}
\thanks{This research is supported in parts by Zheng Ge Ru Foundation, Hong Kong RGC Earmarked Research Grants
CUHK 4042/08P and CUHK 4041/11P.}

\address{Zhouping XIN\hfill\break\indent
Institute of Mathematical Science,\hfill\break\indent
The Chinese University of Hong Kong, Shatin, NT, Hong Kong}
\email{\href{zpxin@ims.cuhk.edu.hk}{zpxin@ims.cuhk.edu.hk}}

\address{Wei YAN\hfill\break\indent
Laboratory of Science and Technology on Computational Physics,\hfill\break\indent
Institute of Applied Mathematics and Computational Physics, Beijing, P.R.China}
\email{\href{wyanmath@gmail.com}{wyanmath@gmail.com}}
\maketitle

\begin{abstract}
In this paper, we study the finite time blow up of smooth solutions to the Compressible Navier-Stokes
system when the initial data contain vacuums. We prove that any classical solutions of viscous compressible fluids without heat conduction will blow
up in finite time, as long as the initial data
has an isolated mass group (see Definition \ref{img}). The results hold regardless of either the size of the initial data or the far fields being
vacuum or not. This improves the blowup results of Xin \cite{xin98} by removing the crucial
assumptions that the initial density has compact support and the smooth solution has finite total energy.
Furthermore, the analysis here also yields that any classical solutions of viscous compressible fluids  without
heat conduction in bounded domains or periodic domains will blow up in finite time, if the initial data
have an isolated mass group satisfying some suitable conditions.
\end{abstract}

\section{Introduction}
Consider the  following well-known compressible Navier-Stokes equations for viscous compressible fluids,
\be\label{ns}
\left\{\ba
&\f{\p\rho}{\p t} + \div(\rho u) = 0,\\
&\f{\p(\rho u)}{\p t} + \div(\rho u\otimes u) + \g p = \div T,\qquad (x,t)\in\O\times\R_+,\\
&\f{\p(\rho E)}{\p t}+\div(\rho E u + pu) = \div(uT) + \k \lap \th.
\ea\right.\ee
The initial data can be taken as
\be\label{init}
\rho(x,0) = \rho_0(x),\ u(x,0) = u_0(x),\ E(x,0) = E_0(x),\quad x\in\O,\ d\geq 2.
\ee
Here $\O\subseteq\R^d$ is a smooth domain in $\R^d$ or periodic domain $\T^d$, and $\rho, u, p, \th$ denote the density, velocity, pressure, internal
energy and temperature respectively.
The specific total energy $E=\f 12 |u|^2+e$, and $T$ is the stress tensor given by
\be
 T = \mu(\g u + \g u^t) + \ld (\div u)I.
\ee
$\mu$ and $ \ld$ are the coefficient of viscosity and second coefficient of viscosity, respectively.
$\k$ is the coefficient of heat conduction.
The pressure $p$ is determined by the equation of state
\be\label{eos}
p = R\rho \th,\quad p=(\gamma-1)\rho e,
\ee
where $R>0$ and $\gamma>1$ are constants.

In the case that the domain $\O$ has boundary, the standard no-slip boundary condition or
Navier-slip boundary condition will be supplemented.

In this paper, it will be always assumed that
\be\label{mul}
\mu >0,\quad \ld +\f 2d\mu > 0,\quad \k=0.
\ee

As one of the most important systems in continuum mechanics, the  theory of global well-posedness  of solutions to the Cauchy problem
and initial-boundary-value problem for the system (\ref{ns}) has been studied extensively in\cite{K3, K4, F1, Ch,
Hof1, Hof2, Hof3, Kaz, L2, MN, M1, Na, Se, xin98, hlx1, hlx2, hlx3, hx, LX} and the references therein.
In particular, non-vacuum small perturbations of a uniform non-vacuum constant state have been shown existing globally in time
 and remain smooth in any space dimensions \cite{Na, Se, MN, M1, Hof1}, while for general data which may contain vacuum states,
 only weak solutions are shown to exist for the isentropic compressible Navier-Stokes system in multi-dimension with special equation of state as in
 \cite {L2, F1}, yet the uniqueness and regularity of these weak solutions remain unknown. In contrary to the one-dimensional case \cite{Hof2}, it is
 still open whether vacuum states can form in finite time from non-vacuum initial and boundary data for the compressible Navier-Stokes systems in
 higher space dimensions. Despite
 the progress on various blow-up criterion \cite{hx, hlx1, hlx2}, and the surprising results on global well-posedness of the classical
 solution to the 3-dimensional compressible Isentropic Navier-Stokes system for initial data with small total energy but
 possible large oscillations and containing vacuum states \cite{hlx3}, the behavior near vacuum of solutions to the system
 (\ref{ns}) remains to be one of the central issues for the global well-posedness of smooth solutions to the general
 full compressible Navier-Stkes system (\ref{ns}), as is illustrated in the following result of blow-up of smooth
 solutions for the full compressible Navier-Stokes system (\ref{ns}) without heat conduction (i.e. satisfying (\ref{mul})):

\begin{theorem}\label{xin}
(from Theorem 1.3 in \cite{xin98})
Consider the compressible Navier-Stokes system (\ref{ns}) without heat-conduction, i.e. (\ref{mul}) is satisfied.
Then there is no non-trivial solution in $C^1([0,\infty), H^m(\R^d))$ to the Cauchy problem, (\ref{ns}) and (\ref{init}), provided that the initial
density has compact support.
\end{theorem}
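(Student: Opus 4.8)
The plan is to argue by contradiction: assume $(\rho,u,e)\in C^1([0,\infty);H^m(\R^d))$, with $m$ large enough that $H^m\hookrightarrow C^1_b$, is a \emph{nontrivial} global solution of \eqref{ns}--\eqref{init} satisfying \eqref{mul} with $\rho_0$ compactly supported, and extract a contradiction from the conservation laws together with the degeneracy of the momentum equation on the vacuum region. First I would record the structural consequences of compact support. Since $u(\cdot,t)$ is bounded and Lipschitz it generates a global flow $\Phi_t$, and the continuity equation gives $\mathrm{supp}\,\rho(\cdot,t)=\Phi_t(\mathrm{supp}\,\rho_0)\subseteq B_{R(t)}$ with $R(t)<\infty$ for every $t$. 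Writing the internal–energy equation in Lagrangian form, $\frac{De}{Dt}=-(\gamma-1)e\,\div u+\rho^{-1}\Psi$ with $\Psi\ge0$ the (nonnegative) viscous dissipation density, shows $e\ge0$ is preserved and stays positive wherever it starts positive; thus $\rho>0$ in the interior of its compact support and, by nontriviality, $E_0:=\int(\tfrac12\rho_0|u_0|^2+\rho_0 e_0)\,dx>0$ with $\int\rho_0 e_0\,dx>0$.

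Next come the global identities. Mass is trivially conserved, $\int\rho\,dx\equiv m_0>0$. For the total energy, the point is that because $\rho$ has compact support the fluxes $\rho Eu$, $pu$ and $\rho u\otimes u+pI$ are compactly supported, while the only non--compactly supported term, the viscous stress $T(u)$, is controlled at infinity: on the open vacuum region $\{\rho=0\}$ the momentum equation collapses to the Lamé system $\mu\lap u+(\mu+\ld)\g\,\div u=0$, so $u$ and $\g u$ decay there at a definite algebraic rate; since the boundary integrals at infinity in the energy balance are quadratic in $(u,\g u)$, they vanish, and one obtains \emph{conservation of total energy}, $\int(\tfrac12\rho|u|^2+\rho e)\,dx\equiv E_0$. (This — and the bound below — is where finiteness of the total energy is genuinely used; the momentum is likewise controlled, though not needed for what follows.)

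The heart of the argument is a virial/moment computation. Set $J(t):=\int\rho|x|^2\,dx\ge0$ and $N(t):=\int\rho\,u\cdot x\,dx$. The continuity equation gives $\dot J=2N$. Testing the momentum equation against $x$ — legitimate after a cutoff at radius $L\to\infty$, using the compact support of $\rho u\otimes u$ and $p$ and the vacuum--region decay of $T(u)$ — yields
\[\dot N=\int\rho|u|^2\,dx+d(\gamma-1)\int\rho e\,dx+V(t),\]
where $V(t)$ is a viscous ``flux at infinity'' that the Lamé structure shows to be innocuous. Writing $K:=\tfrac12\int\rho|u|^2$, $I:=\int\rho e$, so that $K+I\equiv E_0$, this becomes $\dot N=2E_0+\big(d(\gamma-1)-2\big)I(t)$; in particular $\dot N\ge c_\gamma E_0>0$ with $c_\gamma:=\min\{2,d(\gamma-1)\}>0$, so $N(t)\ge N(0)+c_\gamma E_0\,t$ and hence $J$ grows at least quadratically, $J(t)\ge c_\gamma E_0\,t^2\,(1+o(1))$. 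On the other hand, Cauchy--Schwarz together with $K\le E_0$ gives $\dot J=2N\le2\sqrt{2K}\,\sqrt J\le2\sqrt{2E_0}\,\sqrt J$, whence $\sqrt{J(t)}\le\sqrt{J(0)}+\sqrt{2E_0}\,t$ and $J(t)\le2E_0\,t^2\,(1+o(1))$; combined with the compact--support bound $J(t)\le m_0 R(t)^2$ this also forces $R(t)=O(t)$, i.e. the mass spreads at most linearly.

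The contradiction must then come from playing these two quadratic bounds against one another. When $d(\gamma-1)\ge2$ their leading coefficients already coincide, which forces asymptotic equality in the Cauchy--Schwarz step — hence $u$ becomes asymptotically radial relative to $\rho$ and $I(t)\to0$ in an averaged sense — and feeding this back into $\dot N=2E_0+(d(\gamma-1)-2)I$ and into the refined bound $\sqrt{J(t)}\le\sqrt{J(0)}+\int_0^t\sqrt{2(E_0-I(s))}\,ds$ is meant to be incompatible with the strict positivity of $I$ (which the entropy inequality, $\frac{D}{Dt}(e\rho^{1-\gamma})\ge0$, keeps quantitatively alive) and with $R(t)=O(t)$. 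I expect this last matching of constants to be the crux of the whole proof: one needs a quantitative lower bound on how much internal energy persists, strong enough to strictly violate the energy--dictated ceiling on the quadratic growth of $J$, and it is precisely here — together with the conservation $E(t)\equiv E_0>0$ — that the two hypotheses of Theorem~\ref{xin} (compact density support, which fixes the initial size and yields $R(t)=O(t)$; and finite total energy) enter in an essential way. Everything else — support propagation, the energy identity, and the virial computation — is comparatively routine once the boundary terms at infinity are tamed through the Lamé structure on the vacuum region; and it is exactly these two structural crutches that the ``isolated mass group'' mechanism of the present paper is designed to dispense with.
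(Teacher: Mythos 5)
Your virial computation and the use of energy conservation are essentially the right ingredients, but the proposal does not close: the contradiction you point to does not exist, and the step that actually produces the contradiction is missing. Your two asymptotic bounds, $J(t)\gtrsim \min\{2,d(\gamma-1)\}E_0\,t^2$ from below and $J(t)\lesssim 2E_0\,t^2$ from above, are mutually consistent (the lower constant never exceeds the upper one), so ``playing them against one another'' and ``matching leading coefficients'' yields nothing; you acknowledge as much when you say you \emph{expect} this matching to be the crux. The actual crux is entirely different: the hypothesis $u\in C^1([0,\infty),H^m(\R^d))$ is used to show that $u$ \emph{vanishes identically} on the unbounded vacuum region, so that the support of $\rho$ does not move at all --- $\mathrm{supp}\,\rho(\cdot,t)\subseteq B_{R_0}$ for every $t$, not merely $R(t)=O(t)$. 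Then $J(t)\le m_0R_0^2$ is \emph{bounded}, and this flatly contradicts the quadratic growth from the virial identity. Your weaker conclusion $R(t)=O(t)$ is exactly what fails to give a contradiction.

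To get the vanishing of $u$ in vacuum you also need more than the Lam\'e system. On $\{\rho=0\}$ the momentum equation gives $\div T=0$, but solutions of the Lam\'e system need not decay or be rigid; the essential extra input is the energy equation with $\kappa=0$ and $\rho=p=0$, which gives $\div(uT)=0$ there, hence $T:\g u=\tfrac{\mu}{2}|\g u+\g u^t|^2+\ld(\div u)^2=0$ pointwise. Under (\ref{mul}) this quadratic form is positive definite on symmetric gradients, so $\g u+\g u^t=0$, forcing $u(x,t)=A(t)x+b(t)$ with $A$ antisymmetric (this is exactly (\ref{Tu})--(\ref{ee}) in the paper). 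An affine function lying in $H^m$ of an unbounded open set must be zero, whence $u\equiv0$ outside $\mathrm{supp}\,\rho$ and the support is frozen. This is the mechanism the paper attributes to \cite{xin98}, and it is precisely the pair of crutches (vanishing of $u$ at vacuum via $H^m$, bounded support) that the isolated-mass-group argument of Theorem \ref{main} replaces by conservation of momentum and centroid. Secondary issues: your claim that the Lam\'e structure makes the boundary terms at infinity ``innocuous'' is asserted rather than proved, and your entropy-based lower bound on $I(t)$ is never actually established; neither is needed once the correct contradiction is in place.
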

This is the first result of finite time blow-up of smooth solutions for viscous compressible fluids proved by Xin in
\cite{xin98}, which was generalized by Cho and Jin \cite{Ch} to the case of $\k>0$.
 Later, Rozanova \cite{R} obtained a similar  blowup result under rapidly decay assumptions instead of compact support assumptions of initial data.
 Then, Luo and Xin \cite{LX} proved the finite time blowup of symmetric smooth solutions to two dimensional isentropic Navier-Stokes equations and analyzed
 the blowup behavior at infinity time for one point vacuum initial data. Recently, Du, Li and Zhang \cite{du11} show the blowup of smooth solutions to
 the isothermal case for one dimensional case and two dimensional case with spherically symmetric assumptions.

It should be noted that all the results mentioned above on the blowup of smooth solutions are for Cauchy problems
of the compressible Navier-Stokes equations, i.e., $\O=\R^d$, and there are two crucial assumptions that the density
 has compact support spatially (at least, the far field must be in vacuum state), and the velocity field must be in
 $C^1([0, \infty), H^m(\R^d))$ (which implies in particular that the solution has finite energy so that the velocity
 is well-defined even in the vacuum region). The first natural question is whether the far field being in vacuum
 (in particular, the density has compact support) is a necessary condition for the finite time blow-up of smooth solutions. Indeed, for isentropic
 compressible Navier-Stokes equations in two or three space dimensions, when the far
 fields are non-vacuum, there exist smooth global small energy solutions which may contain vacuum for the Cauchy problem, \cite{hlx3}. It is an open
 problem whether the similar theory holds for the full compressible Navier-Stokes system (\ref{ns}). Another important question is whether one can
 remove the assumption that $u\in H^m (\R^d)$ for suitable large $m$. It should be emphasized that this is a very strict assumption which plays a
 crucial role in the analysis of the blow-up results in \cite{xin98, Ch, LX, du11, R}. Yet, it is not clear physically why the velocity field has the
 asymptotic behavior at vacuum. Furthermore, even the local existence of smooth solutions for the Cauchy problems with initial data containing vacuum
 are proved only in the case that the velocity fields are in some suitable homogeneous space (in particular the velocity fields are not square
 integrable on $\R^d$, \cite{K3, K4}. It should be also noted that the global well-posedness of
 smooth solutions which may contain vacuum states and large oscillations for the Cauchy problem of the isentropic compressible Navier-Stokes in $\R^3$
 are in homogeneous spaces also \cite{hlx3}.
  Thus it is desirable to generalize the blow-up results
 in Theorem \ref{xin} to general classical solutions to the compressible Navier-Stokes system without the assumption that $u \in H^m(\R^d)$. Finally,
 since all the the previous blow-up results of smooth solutions concern only with Cauchy problems and it seems difficult to adapt the available
 methods to deal with the initial-boundary value problems and periodic problems which are also very important issues for compressible Navier-Stokes
 equations.

 In this paper, we will answer all three main questions mentioned above. First, we show  the finite time blow up of classical solutions to the Cauchy
problem for the compressible Navier-Stokes equations (\ref{ns}) without heat conduction
for a class of initial data containing vacuum but without any restrictions on the velocity fields at vacuum beyond
the regularity. The class of initial data includes the case that the initial density has compact support. The proof is
based on the key observation that if initially a positive mass is surrounded by a bounded vacuum region, then the time
evolution remains uniformly bounded for all time. Then this analysis can be modified easily to show the finite time blowup of classical solutions to
initial-boundary value problems and periodic problems under some suitable conditions.

The rest of this paper is organized as follows. In section \ref{notion}, we give some notions, state main results, and describe the main ideas of
the proof. Then the key estimates and the complete proofs of the results are given in section \ref{proof} and section \ref{proof3}.

\section{Notations and main theorems}\label{notion}

Before stating the main results, we introduce some notations. Recall that the classical solutions to the compressible
Navier-Stokes equations can be defined as follows:

\begin{definition}\label{cla} (Classical solutions) Let T be positive. A triple $(\rho(x,t), u(x,t), E(x,t))$ is called
a classical solution to the compressible Navier-Stokes system (\ref{ns}) on $\O \times (0,T)$ if $\rho \in C^1(\O \times [0,T)), (u,E) \in C^1([o,T),
C^2(\O))$, and satisfies the system (\ref{ns}) point-wisely on $\O \times (0,T)$. It is called a classical solution to the Cauchy problem (\ref{ns})
and (\ref{init}) if it is a classical solution to the system (\ref{ns}) on $\R^d \times (0,T)$ and takes on the initial data (\ref{init})
continuously. Similarly, it is called a
classical solution to the initial-boundary-value problem for the system (\ref{ns}) if it is a classical solution to the
system (\ref{ns}), takes the initial data (\ref{init}),  and satisfies the boundary conditions continuously.
\end{definition}

We now identify a class of initial data which contains vacuum states. Let the initial data for a classical solution
$(\rho(x,t), u(x,t), E(x,t))$ to the system (\ref{ns}) be defined in (\ref{init}).

\begin{definition}\label{img} (Non-periodic case) Suppose $\O$ be a smooth domain in $\R^d$. The pair $(V, U)$ is called an isolated mass group of
$\rho_0(x)$, if both $V\subset\O$ and $U\subset\O$ are bounded open sets, $U$ is connected, and satisfy
\be\label{h1}
\left\{\ba
&V\subset\overline V\subset U,\\
&\rho_0(x)=0,\quad \mathrm{ in }\ U - V,
\ea\right.
\ee
and $\rho_0(x)$ is not identically equal to zero on $V$. If $U\subseteq B_{R}(\bar x)$ for some $\bar x$, then $(V, U)$ is said to have radius $R$.
For simplicity,
$(V,U)$ is called an isolated mass group.
\end{definition}
In the periodic case, this definition is modified as follows:
\begin{definition}\label{img2}
(Periodic case) Suppose $\O=\T^d$. The pair $(V, U)$ is called an isolated mass group of $\rho_0(x)$, if
the pair $(V,U)$ is an isolated mass group of $\rho_0(x)$ in the sense of Definition \ref{img} after periodic extension $\T^d$ to $\R^d$.
If, after periodic extension, $U\subseteq B_{R}(\bar x)\subset \R^d$ for some $\bar x$,  $(V, U)$ is said to have radius $R$.
\end{definition}
\begin{remark}
It is noted that $B_R$ in Definition \ref{img2} may not be contained in $\T^d$.
\end{remark}
\begin{remark}
Since the Navier-Stokes equations are invariant under translation,
without loss generation, it will be assumed that $\bar x=0$ in this paper without explicit declaration.
\end{remark}

Let $(V, U)$ be an isolated mass group with radius $R_1$.
Denote by $m_0,\ x_0$ the initial mass and initial centroid of the isolated mass group $V$  respectively. That is,
\be\label{m0}
m_0 = \int_{V} \rho_0(x)dx >0,\quad x_0 = \f 1{m_0}\int_{V} \rho_0(x) xdx.
\ee
Set
\be\ba\label{m1234}
& m_1 = \int_{V}\rho_0(x) u_0(x) dx, && m_2 = \int_{V}|x|^2\rho_0(x) dx>0,\\
& m_3=\int_{V}\rho_0(x) (u_0(x)-\f{m_1}{m_0})\cdot xdx,&&m_4 = \int_{V}\rho_0(x) (\f 12 |u_0(x)-\f{m_1}{m_0}|^2+e_0(x))dx>0.
\ea\ee
Denote by $T^*$  the only positive root of
\be\label{ts}
\min(2, d(\gamma-1)) m_4 t^2  + 2m_3 t -( (2R_1 + |x_0|)^2 m_0 - m_2)= 0.
\ee
Let $\ch{U}$ be the closed convex hull of $U$.

Then the main results in this paper are the following three theorems. The first theorem concerns with the Cauchy problem.
\begin{theorem}\label{main}
Consider the full  compressible Navier-Stokes system (\ref{ns})   without heat-conduction, i.e., satisfying (\ref{mul}).
Assume that initial density has an isolated mass group.
Then there is no global in time classical  solution  to the Cauchy problem for the Navier-Stokes system (\ref{ns}) and (\ref{init}).
\end{theorem}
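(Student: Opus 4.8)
\medskip

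The plan is to argue by contradiction: suppose a global classical solution $(\rho,u,E)$ exists on $\R^d\times(0,\infty)$, and show that the isolated mass group $V$ forces a geometric obstruction that is violated for $t\geq T^*$. The central idea, as advertised in the introduction, is that the vacuum collar $U-V$ shields the mass originally in $V$: by finite propagation in the continuity equation, the support of the mass particles emanating from $V$ stays inside the image of $U$ under the Lagrangian flow, and — crucially — cannot cross the vacuum region because the density there stays zero. First I would make this precise. Let $X(t,\cdot)$ be the particle trajectory map solving $\dot X=u(X,t)$, $X(0,x)=x$; since $u\in C^1$, this flow is well-defined and is a $C^1$ diffeomorphism of $\R^d$ for each $t$. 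Writing $V(t)=X(t,V)$ and $U(t)=X(t,U)$, one checks from $\p_t\rho+\div(\rho u)=0$ that $\rho(\cdot,t)$ is supported, within $U(t)$, on $V(t)$, that $\rho=0$ on $U(t)-V(t)$, and that $U(t)$ remains a bounded connected open set with $\overline{V(t)}\subset U(t)$. Conservation of mass gives $\int_{V(t)}\rho(x,t)\,dx=m_0$ for all $t$.

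\medskip

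Next I would set up the conserved/controlled quantities attached to $V(t)$. Using the momentum equation integrated over $V(t)$ — here the point is that $\p V(t)$ lies in the vacuum region, so both the convective flux $\rho u\otimes u$, the pressure $p=(\gamma-1)\rho e$, and the viscous stress $T$ (which is a first-order operator on $u$ and hence, together with $\rho$, involves $\rho$ only through products that vanish on the vacuum boundary; more carefully, one integrates $\p_t(\rho u)+\div(\rho u\otimes u)+\g p=\div T$ and notes that all boundary terms on $\p V(t)$ vanish because $\rho\equiv 0$ in a neighborhood of $\p V(t)$ inside $U(t)$) — one obtains that the total momentum $\int_{V(t)}\rho u\,dx\equiv m_1$ is conserved. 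Similarly the total energy $\int_{V(t)}\rho E\,dx$ is conserved and equals $m_4+\tfrac12|m_1|^2/m_0$ (using $\k=0$ so there is no heat flux), and the centroid moves linearly: $\frac{d}{dt}\int_{V(t)}\rho x\,dx=m_1$, so the centroid of $V(t)$ is $x_0+(m_1/m_0)t$. Then I would examine the second moment $F(t):=\int_{V(t)}\rho(x,t)\,|x|^2\,dx$. Differentiating twice and using the equations, the viscous and pressure contributions can be signed or controlled: one gets an identity of the form $F''(t)=2\int\rho|u|^2 + (\text{pressure term}) - (\text{viscous term})$, where the pressure term $2\int (\div u) p$-type contribution combined with the internal-energy equation yields something like $2d(\gamma-1)\int\rho e$ up to the dissipation, and the viscous term has a favorable sign. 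The upshot I expect is a differential inequality $F''(t)\le \min(2,d(\gamma-1))\,\big(2m_4+|m_1|^2/m_0\big)$ — or, after subtracting off the linear centroid motion, $\tilde F(t):=\int_{V(t)}\rho|x-x_0-(m_1/m_0)t|^2\,dx$ satisfies $\tilde F''(t)\le \min(2,d(\gamma-1))\,m_4$ with $\tilde F(0)=m_2 - |x_0|^2 m_0$ (or the appropriate combination) and $\tilde F'(0)=2m_3$. Integrating twice bounds $\tilde F(t)$, hence bounds $F(t)$, by a quadratic polynomial in $t$.

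\medskip

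The contradiction then comes from confinement. Because $V(t)\subset U(t)$ and $U(t)$ — being the continuous flow image of the connected bounded set $U$ whose boundary carries no mass — remains bounded for every finite $t$, the set $V(t)$ stays within a bounded region; I would use the convex hull $\ch{U}$ and the fact that the flow cannot tear $U$ apart. Quantitatively, $V(t)$ must lie in the ball of radius (centroid displacement) + (spatial extent), and the spatial extent is controlled precisely by $F(t)$ via Chebyshev/second-moment estimates: since $\int_{V(t)}\rho\,dx=m_0$, a lower bound $F(t)\ge (\text{typical radius})^2 m_0$ would hold if $V(t)$ escaped, but we have an \emph{upper} bound on $F(t)$. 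Matching the two: the quadratic upper bound $\tilde F(t)\le \min(2,d(\gamma-1))m_4 t^2 + 2m_3 t + (\tilde F(0))$ must stay compatible with $V(t)$ being confined near its moving centroid inside $U(t)$, and by the definition \eqref{ts} of $T^*$ this fails for $t\geq T^*$. Concretely, I would show that the diameter of $U(t)$ cannot grow, or grows too slowly, relative to what conservation of mass plus the second-moment bound would require once the centroid has drifted distance $|m_1/m_0|\,t$; the root of \eqref{ts} is exactly the threshold where $(2R_1+|x_0|)^2 m_0 - m_2$ is exhausted.

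\medskip

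\emph{Main obstacle.} The delicate point is justifying that all boundary terms vanish when integrating the momentum and energy equations over the moving domain $V(t)$ — i.e., that $\p V(t)$ genuinely stays inside the vacuum annulus $U(t)-V(t)$ for all time, so that no momentum, energy, pressure or viscous flux leaks out, and that $U(t)$ stays bounded and connected with the vacuum collar intact. This requires care with the Lagrangian flow for a merely $C^1$ velocity field that may vanish on large sets, and with the fact that "support of $\rho$" is transported correctly by this flow; I would isolate this as a preliminary lemma (propagation of the isolated-mass-group structure), and expect the rest — the second-moment ODE inequality and the arithmetic with $T^*$ — to be comparatively routine.
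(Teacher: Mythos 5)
Your proposal has the right overall shape (Lagrangian flow, conserved quantities over the transported mass group, second moment versus confinement), but it is missing the one idea that actually makes the argument close, and without it the ``confinement'' step is unjustified. You assert that $U(t)$ ``remains bounded for every finite $t$'' and that you ``would show that the diameter of $U(t)$ cannot grow'' --- but boundedness at each finite time is vacuous here, and you give no mechanism for a \emph{uniform-in-time} bound. The paper's mechanism is a rigidity statement: since $\rho\equiv 0$ on the transported annulus $\O_0(t)=X(U-V,t)$, the momentum and energy equations force $\div T=0$ and $\div(uT)=0$ there, which under (\ref{mul}) gives $\g u+\g^t u=0$, hence $u(x,t)=A(t)x+b(t)$ with $A(t)$ antisymmetric. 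Consequently the vacuum particles move as a rigid body: defining $D(t)$ by $D'(t)=A(t)D(t)+b(t)$, $D(0)=0$, one gets $|X(\a,t)-D(t)|=|\a|\le R_1$ for every $\a\in U-V$, so $\O_2(t)\subset B_{R_1}(D(t))$ for all $t$; and $D(t)$ itself is bounded by $R_1+|x_0|$ because the conserved centroid $x_0$ must lie in the convex hull of $\O_2(t)$. The resulting uniform bound $\O_2(t)\subset B_{2R_1+|x_0|}$ is what contradicts the quadratic growth of the second moment. Nothing in your write-up supplies a substitute for this; the paper's introduction stresses precisely that the velocity in vacuum need not vanish and the interface moves, so ``the support cannot spread'' is exactly what must be proved, not assumed. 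You also misidentify the main obstacle as the transport of the vacuum collar by a $C^1$ flow, which is comparatively routine.

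Two further points. First, your justification of the conservation laws is incomplete: the viscous stress $T=\mu(\g u+\g^t u)+\ld(\div u)I$ does not contain $\rho$, so the boundary term $\int_{\p\O_2(t)}Tn\,ds$ does not vanish merely because $\rho=0$ near the boundary; one needs $T\equiv 0$ in the vacuum annulus, which again comes from the rigidity above. Second, your second-moment inequality points the wrong way: the contradiction requires a quadratic \emph{lower} bound $\int_{\O_2(t)}\rho|x|^2\,dx\ge m_2+2m_3t+\min(2,d(\gamma-1))m_4t^2$ (from the positivity of the pressure together with the conserved energy) set against the constant \emph{upper} bound $(2R_1+|x_0|)^2m_0$ coming from confinement; an upper bound on the second moment combined with confinement produces no contradiction at all.
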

In the case of periodic domains, we have:
\begin{theorem}\label{main2}
Consider the viscous compressible flows without heat-conduction in the periodic domain $\T^d$.
Suppose that the viscosity coefficients $\mu,\ \ld$ satisfy (\ref{mul})
and the initial density has an isolated mass group $(V, U)$.
Then there is no global in time classical  solution  to the Cauchy problem for the compressible Navier-Stokes system (\ref{ns}) with initial data (\ref{init}).
\end{theorem}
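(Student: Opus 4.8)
The plan is to argue by contradiction, transporting the isolated mass group along the Lagrangian flow and showing that a suitable moment of inertia of the mass group would then be simultaneously convex with positive leading coefficient and uniformly bounded in time, which is impossible for $t>T^*$. Suppose $(\rho,u,E)$ is a global classical solution of (\ref{ns}), (\ref{init}) on $\T^d\times(0,\infty)$; by the Galilean invariance of (\ref{ns}) (which fixes $x_0,R_1,m_2,m_3,m_4$, hence $T^*$) we may assume $m_1=0$. Its periodic extension is a global classical solution on $\R^d$ (again denoted $(\rho,u,E)$), periodic in $x$, whose initial density has the isolated mass group $(V,U)$ with $\overline V\subset U\subseteq B_{R_1}(0)$. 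Let $X(t,\cdot)$ be the flow of $u$, i.e. $\p_tX(t,y)=u(X(t,y),t)$, $X(0,y)=y$, a $C^1$-diffeomorphism of $\R^d$ for each $t$, and put $V(t)=X(t,V)$, $U(t)=X(t,U)$. The continuity equation yields $\rho(X(t,y),t)=\rho_0(y)\exp(-\int_0^t\div u(X(s,y),s)\,ds)$, so the vacuum layer is transported: $\rho\equiv0$ on $U(t)\setminus V(t)$, while $\int_{V(t)}\rho(x,t)\,dx\equiv m_0$; in particular $p=(\gamma-1)\rho e=0$ on $\p V(t)$ for all $t$.

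Next I would study the moment of inertia $\Phi(t)=\int_{V(t)}\rho(x,t)\,|x|^2\,dx$. By the continuity equation and the transport theorem, $\Phi'(t)=2\int_{V(t)}\rho\,x\cdot u\,dx$, so $\Phi(0)=m_2$ and $\Phi'(0)=2\int_V\rho_0\,x\cdot u_0\,dx=2m_3$ (using $m_1=0$). Differentiating again, inserting the momentum equation, and using $\div x=d$ with $p=0$ on $\p V(t)$,
\be\label{sketch1}
\Phi''(t)=2\int_{V(t)}\rho|u|^2\,dx+2d(\gamma-1)\int_{V(t)}\rho e\,dx+R(t),
\ee
where $R(t)=2\int_{\p V(t)}x\cdot(Tn)\,dS-2\int_{V(t)}\mathrm{tr}\,T\,dx$ accounts for the viscous stress transmitted through the vacuum layer (equivalently an integral of $T:\g\phi$ over the shell for a cut-off $\phi$ transported by the flow, $\phi\equiv1$ on $\overline{V(t)}$, $\spt\phi\subseteq U(t)$). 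The analytic core, carried out in Section \ref{proof3}, is to combine (\ref{sketch1}) with the mass group's energy balance $\f{d}{dt}\int_{V(t)}\rho(\f12|u|^2+e)\,dx=\int_{\p V(t)}(uT)\cdot n\,dS$ and to show that, after the appropriate (time-integrated) cancellations, the viscous terms do no harm — concretely, one should reach $\int_{V(t)}\rho(\f12|u|^2+e)\,dx\ge m_4$ and hence, via $2\int_{V(t)}\rho|u|^2+2d(\gamma-1)\int_{V(t)}\rho e\ge2\min(2,d(\gamma-1))\big(\f12\int_{V(t)}\rho|u|^2+\int_{V(t)}\rho e\big)$, the convexity estimate
\be\label{sketch2}
\Phi(t)\ge m_2+2m_3\,t+\min(2,d(\gamma-1))\,m_4\,t^2,\qquad t\ge0.
\ee

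On the other hand — the new geometric input relative to \cite{xin98} — I would prove the uniform-in-time confinement $V(t)\subseteq B_{2R_1+|x_0|}(0)$ for all $t\ge0$: the positive mass initially enclosed by a \emph{bounded} vacuum region stays in a fixed ball, because the vacuum shell $U(t)\setminus V(t)$ persists and remains bounded. This step (Section \ref{proof3}) is precisely where the boundedness of $U$ is used, as opposed to the mere compactness of the density at each fixed time in \cite{xin98}. Granting it, $\Phi(t)=\int_{V(t)}\rho|x|^2\,dx\le(2R_1+|x_0|)^2m_0$ for all $t$, and comparing with (\ref{sketch2}),
\be\label{sketch3}
m_2+2m_3\,t+\min(2,d(\gamma-1))\,m_4\,t^2\le(2R_1+|x_0|)^2m_0\qquad\text{for all }t\ge0.
\ee
Since $m_0>0$, $m_4>0$ and $(2R_1+|x_0|)^2m_0-m_2\ge\big((2R_1+|x_0|)^2-R_1^2\big)m_0>0$, the quadratic in (\ref{ts}) has a unique positive root $T^*$ and its left-hand side strictly exceeds $(2R_1+|x_0|)^2m_0$ for $t>T^*$; this contradicts (\ref{sketch3}). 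Hence no global classical solution exists (indeed none on $[0,T]$ for any $T>T^*$), which proves Theorem \ref{main2}.

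The hard parts — both in Section \ref{proof3} — are: (i) the confinement $V(t)\subseteq B_{2R_1+|x_0|}(0)$, which must genuinely exploit the boundedness of the vacuum region and has no counterpart in \cite{xin98}; and (ii) absorbing the viscous stress and viscous work transmitted through the vacuum layer in (\ref{sketch1}), since the divergence-structure cancellations that annihilated the viscous term in \cite{xin98} relied on decay at spatial infinity (exact conservation of total momentum and energy), which is unavailable in the periodic setting.
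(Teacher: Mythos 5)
Your overall architecture --- periodic extension to $\R^d$, Galilean reduction to $m_1=0$, transport of the mass group by the Lagrangian flow, and the contradiction between quadratic growth of the second moment and a uniform-in-time confinement --- is exactly the paper's. However, the two steps you flag as ``the hard parts'' and defer are left without the one idea that makes them work, and your heuristic justification for the confinement is not valid as stated: the mere persistence of the vacuum shell $U(t)\setminus V(t)$ does not imply that it stays bounded, since the flow could a priori stretch it arbitrarily far.

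The missing idea is the rigidity of the velocity field in the vacuum region. Since $\rho\equiv 0$ on $\O_0(t)=X(t,U\setminus V)$, the momentum and energy equations there degenerate to $\div T=0$ and $\div(uT)=0$; under (\ref{mul}) these force $\g u+\g^t u=0$, hence $u(x,t)=A(t)x+b(t)$ with $A(t)$ antisymmetric (Lemma \ref{vu}). This single fact disposes of both difficulties at once. First, $T\equiv 0$ on the vacuum shell, so every viscous boundary term in your $R(t)$ and in the shell energy balance vanishes identically --- no ``time-integrated cancellations'' are needed; one obtains exact conservation of the mass, momentum, energy and centroid of $\O_2(t)$, together with $\int_{\O_2(t)}(\div T)\cdot x\,dx=0$. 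Second, letting $D(t)$ solve $D'=A(t)D+b(t)$, $D(0)=0$, the antisymmetry of $A$ gives $|X(\a,t)-D(t)|=|\a|$ for $\a\in U\setminus V$, so the shell, and hence all of $\O_2(t)$, lies in $B_{R_1}(D(t))$; since the conserved centroid $x_0$ must lie in the closed convex hull of $\O_2(t)$, one gets $|D(t)|\le R_1+|x_0|$ and therefore the confinement $\O_2(t)\subseteq B_{2R_1+|x_0|}(0)$. With these two lemmas supplied, your computation closes and reproduces the paper's proof of Theorem \ref{main}; the periodic case then follows, as you say, by periodic extension.
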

Finally,  we deal with the initial boundary value problem.
\begin{theorem}\label{main3}
Consider the  viscous compressible flows without heat-conduction in a  smooth domain $\O\subset\R^d$.
Suppose that the viscocity coefficients $\mu,\ \ld$ satisfy (\ref{mul}).
and the initial density has an isolated mass group $(V, U)$ with radius $R_1$.
Assume further  that
\be\label{e11}
\ba
&B_{2R_1 + |x_0| + \f{|m_1|}{m_0}T^*}(0)\subset \O,
\ea\ee
where $T^*$ is defined in (\ref{ts}).
Then there is no global in time classical solution to the initial-boundary value  problem for the compressible Navier-Stokes system (\ref{ns})
with initial data  (\ref{init}) and suitable boundary conditions.
\end{theorem}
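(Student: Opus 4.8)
\medskip
\noindent\emph{Proof strategy for Theorem \ref{main3}.}
The plan is to run the mechanism behind the compact--support case (Theorem \ref{xin})---a weighted second moment of the mass grows at least quadratically in $t$ while it stays trapped in a ball of fixed size---but \emph{localized} to the isolated mass group, exploiting that the vacuum shell $U\setminus\overline V$ insulates the chunk of mass inside $V$ from the rest of the fluid. After translating so that $\bar x=0$ (hence $U\subset B_{R_1}(0)$ with $R_1>0$), and after enlarging $V$ to absorb those connected components of $U\setminus\overline V$ whose closures do not meet $\p U$ (this keeps $\rho_0\equiv0$ on the remaining shell and $\rho_0\not\equiv0$ on $V$), I may assume $A:=U\setminus\overline V$ is connected; I also use the admissibility facts $\p V,\p U\subset\overline A$ and $\d_0:=\dist(\overline V,\p U)>0$, and note that by (\ref{e11}) one has $\overline U\subset\overline{B_{R_1}(0)}\subset\O$. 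Introduce the Lagrangian flow $X(t,x)$, $\p_tX=u(X,t)$, $X(0,x)=x$; by the $C^1$--regularity of $u$ this is, for as long as the solution exists, a homeomorphism onto its image on a neighbourhood of $\overline U$, and set $V(t)=X(t,V)$, $A(t)=X(t,A)$, $U(t)=X(t,U)$. Integrating the continuity equation along trajectories, $\rho(X(t,x),t)=\rho_0(x)\exp(-\int_0^t(\div u)(X(s,x),s)\,ds)$, so $A(t)$ stays vacuum and, being the homeomorphic image of $A$, remains an open outer collar of $\p V(t)$.

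\emph{Rigid motion in the vacuum.} On $A(t)$ we have $\rho=0$, hence $p=R\rho\th=0$, so the momentum equation collapses to $\div T=0$ and the heat--conduction--free energy equation to $\div(uT)=0$; subtracting $u\cdot\div T=0$ gives $T:\g u=0$ on $A(t)$. With $Du:=\f12(\g u+\g u^t)$ one has $T:\g u=2\mu\,|Du-\f1d(\div u)I|^2+(\ld+\f2d\mu)(\div u)^2$, a positive--definite quadratic form in $\g u$ under (\ref{mul}); hence $Du\equiv0$ on $A(t)$, i.e.\ $u=a(t)+B(t)x$ there with $B(t)$ antisymmetric, and in particular $T=2\mu Du+\ld(\div u)I\equiv0$ on $A(t)$. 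By continuity, $\rho=p=0$ and $T=0$ on $\p V(t)\subset\overline{A(t)}$. Moreover $X(t,x)\in\overline{A(t)}$ for every $x$ in the frame $\overline U\setminus V=\p V\cup A\cup\p U$, where $u$ equals the single rigid field, so $\dot X=a(t)+B(t)X$ and $t\mapsto X(t,x)=R(t)x+q(t)$ is a time--dependent rigid motion, $R(t)\in SO(d)$, common to the whole frame.

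\emph{Conservation and confinement.} From the transport identity $\f{d}{dt}\int_{V(t)}f=\int_{V(t)}(\p_tf+\div(fu))$, the system (\ref{ns}), and the vanishing of $p,T$ on $\p V(t)$: the mass $\int_{V(t)}\rho\equiv m_0$; the momentum $\int_{V(t)}\rho u\equiv m_1$, so the centroid is $\bar x(t)=x_0+\f{m_1}{m_0}t$; the total energy $\int_{V(t)}\rho E$ is conserved, hence so is the centroid--frame energy $\int_{V(t)}\rho(\f12|u-\f{m_1}{m_0}|^2+e)\equiv m_4$. For the size: $\p U(t)=R(t)(\p U)+q(t)$ is a rigid copy of $\p U$, so $U(t)\subset\mathrm{conv}(\p U(t))=R(t)(\ch U)+q(t)\subset\overline{B_{R_1}(q(t))}$; since $\bar x(t)\in\mathrm{conv}(\overline{V(t)})\subset\overline{B_{R_1}(q(t))}$ we get $|q(t)-\f{m_1}{m_0}t|\le R_1+|x_0|$, while the frame moving rigidly preserves $\dist(\overline{V(t)},\p U(t))=\d_0$, forcing $|x-q(t)|\le R_1-\d_0$ for $x\in V(t)$. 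Hence $|x-\f{m_1}{m_0}t|\le 2R_1+|x_0|-\d_0$ for $x\in V(t)$, and $V(t)\subset U(t)\subset\overline{B_{2R_1+|x_0|+\frac{|m_1|}{m_0}t}(0)}$, which lies in $\O$ for $0\le t\le T^*$ by (\ref{e11}); thus the whole configuration stays compactly inside $\O$ and all of the above is valid on $[0,T^*]$.

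\emph{Virial identity and contradiction.} Put $b=\f{m_1}{m_0}$ and $K(t)=\int_{V(t)}\rho\,|x-bt|^2\,dx$, so $K(0)=m_2$. The transport identity and the momentum equation give $K'(t)=2\int_{V(t)}\rho(x-bt)\cdot(u-b)\,dx$, hence $K'(0)=2m_3$, and $K''(t)=2\int_{V(t)}\big(\rho|u-b|^2+(x-bt)\cdot(-\g p+\div T)\big)\,dx$. Integrating by parts, using $p=0$ and $T=0$ on $\p V(t)$ and $\int_{V(t)}\div u=\int_{\p V(t)}u\cdot\nu=\int_{\p V(t)}(a(t)+B(t)x)\cdot\nu=0$ (the rigid boundary field extends divergence--freely into $V(t)$), every viscous and boundary term cancels, leaving $K''(t)=2\int_{V(t)}\rho|u-b|^2\,dx+2d\int_{V(t)}p\,dx=4m_4+(2d(\gamma-1)-4)\int_{V(t)}\rho e\,dx$, where I used $p=(\gamma-1)\rho e$ and $\int_{V(t)}\rho(\f12|u-b|^2+e)=m_4$. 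Since $0\le\int_{V(t)}\rho e\le m_4$, treating $d(\gamma-1)\ge2$ and $d(\gamma-1)<2$ separately gives $K''(t)\ge\min(4,2d(\gamma-1))\,m_4$. Integrating twice, $K(T^*)\ge m_2+2m_3T^*+\min(2,d(\gamma-1))\,m_4\,(T^*)^2=(2R_1+|x_0|)^2m_0$ by the definition (\ref{ts}) of $T^*$; but the confinement bound gives $K(T^*)=\int_{V(T^*)}\rho|x-bT^*|^2\,dx\le(2R_1+|x_0|-\d_0)^2m_0<(2R_1+|x_0|)^2m_0$, a contradiction. Hence no global classical solution exists. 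The main obstacle is the vacuum rigid--motion lemma and its three consequences---persistence of the vacuum collar, the vanishing $p=T=0$ on $\p V(t)$, and the size--preserving rigid motion of the frame $\overline U\setminus V$, which together with (\ref{e11}) trap the mass group inside $\O$ up to time $T^*$; once these are secured the virial computation is routine. A secondary technical issue is the regularity and topology of $(V,U)$ required for $\p V,\p U\subset\overline A$, for $\d_0>0$, and for the reduction to connected $A$.
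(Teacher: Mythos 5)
Your proposal is correct and follows essentially the same route as the paper: a rigid (Killing) velocity field and vanishing stress tensor in the transported vacuum shell, conservation of mass, momentum, energy and centroid for the isolated mass group, confinement of its support in a ball of radius $2R_1+|x_0|$ about the drifting centroid, and a virial inequality that contradicts this confinement by time $T^*$, with (\ref{e11}) keeping the whole configuration inside $\O$ up to that time. The only noteworthy difference is that the paper integrates the equations over $\O_2(t)=X(t,U)$ rather than over the image of $V$, which makes the boundary terms vanish automatically (since $\p U\subset\overline{U\setminus\overline V}$ always holds) and thereby sidesteps the extra topological hypothesis $\p V\subset\overline{U\setminus\overline V}$ that your version has to impose.
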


One of the interesting corollaries of Theorem \ref{main} is the following strong version of the blow-up results of Xin  \cite{xin98}:
\begin{corollary}\label{cor1}
Assume that the viscosity coefficients satisfy the condition (\ref{mul}). Then there is no non-trivial global in time classical solution to the
Cauchy problem for the compressible Navier-Stokes system (\ref{ns}) with initial data (\ref{init}), provided that the initial density has compact
support, i.e.,
\be
\spt\rho_0(x)\subset\subset B_{R_0}(0).
\ee
\end{corollary}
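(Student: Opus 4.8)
The plan is to obtain Corollary \ref{cor1} as an immediate consequence of Theorem \ref{main}, the only work being to manufacture an isolated mass group out of a compactly supported, nontrivial initial density. First I would dispose of the degenerate case: a nontrivial classical solution must have $\rho_0 \not\equiv 0$, since if $\rho_0 \equiv 0$ then the density is transported by $u$ and stays identically zero for all time, which is the trivial (pure vacuum) regime excluded from the statement. So from now on assume $\rho_0 \not\equiv 0$ with $\spt \rho_0 \subset\subset B_{R_0}(0)$, and suppose for contradiction that a global in time classical solution to the Cauchy problem (\ref{ns}), (\ref{init}) exists.

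Next I would exhibit the isolated mass group explicitly. Take $V = B_{R_0}(0)$ and $U = B_{2R_0}(0)$. Then $V$ and $U$ are bounded open subsets of $\O = \R^d$, $U$ is connected, and $\overline V = \overline{B_{R_0}(0)} \subset B_{2R_0}(0) = U$, so (\ref{h1})$_1$ holds. Because $\spt \rho_0$ is a compact subset of the \emph{open} ball $B_{R_0}(0)$, one has $\rho_0 \equiv 0$ on $\R^d \setminus B_{R_0}(0)$, and in particular on $U - V = \{x : R_0 \le |x| < 2R_0\}$, which gives (\ref{h1})$_2$; meanwhile $\rho_0 \not\equiv 0$ on $V = B_{R_0}(0)$ by the nontriviality reduction above. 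Hence $(V,U)$ is an isolated mass group of $\rho_0$ in the sense of Definition \ref{img} (with radius $2R_0$).

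Finally, Theorem \ref{main} applies verbatim to this Cauchy problem: since $\rho_0$ possesses an isolated mass group, there is no global in time classical solution, contradicting the standing assumption. This proves the corollary. (If one prefers, the same argument shows more: \emph{any} nontrivial classical solution on $\R^d \times (0,T)$ with $\spt\rho_0 \subset\subset B_{R_0}(0)$ must have finite maximal existence time.)

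I do not expect any genuine analytical obstacle here; the entire substance is contained in Theorem \ref{main}. The only points demanding (minor) care are the bookkeeping that compact support of a nonzero $\rho_0$ truly yields the surrounding vacuum annulus $U-V$ required by Definition \ref{img}, and the harmless observation that the $\rho_0 \equiv 0$ case is ruled out by nontriviality so that $m_0 = \int_V \rho_0 \, dx > 0$ is legitimate.
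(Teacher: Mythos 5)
Your proposal is correct and follows exactly the paper's own route: the paper's proof is the one-line "Taking $V = B_{R_0}$ and $U = B_{2R_0}$ in Theorem \ref{main} yields the desired conclusion," and your verification of Definition \ref{img} plus the disposal of the $\rho_0\equiv 0$ case simply makes that line explicit.
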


\begin{remark}
Theorem \ref{main3} holds independent of the boundary conditions on $\p\O$.
\end{remark}

\begin{remark}
Theorem \ref{main} shows that
any classical solution to the  compressible Navier-Stokes system  without
heat conduction will blow up in finite time, as long as its initial data
has an isolated mass group,
no matter how small the initial data is and no matter whether the states of its far fields are vacuum or not.
\end{remark}

\begin{remark}
It should be noted that the condition (\ref{e11}) can be loosen slightly to
\be\label{e12}
\ba
&\ch{B_{2R_1 + |x_0|}(0)\cup B_{2R_1+|x_0|}(\f{m_1}{m_0}T^*)} \subset \O.
\ea\ee
These conditions guarantee that the boundaries of isolated mass groups are  away from the boundary of the domain
in the lifespan of the classical solution.
\end{remark}

Now, we make some comments on  the main ideas of the proofs.
Recall that there are two crucial elements in the proof of the blowup result, Theorem \ref{xin}, in \cite{xin98}. The first is that the total
pressure over $\R^d$  decays fast
in time, and the second  is the support of the density grows sub-linearly in time. It is noted that the decay of the total pressure involves some
integrability of powers
of the density,  which can be guaranteed by the condition that the density has compact support,  but  has nothing to do with the integrability of the
velocity field
over $\R^d$. However, the  assumption $u\in H^m$ plays an essential role to show that $u(x)=0$ in
(unbounded) vacuum
regions which ensures that the support of density is preserved in time, and thus contradicts to the fact that
the second moment increases in the order at least as $t^2$  as $t\ra\infty$ \cite{xin98, Ch, du11, LX}. To deal with the general case here, we  note
that the argument  in
 \cite{xin98} show that, for classical  solutions to the compressible Navier-Stokes equations (\ref{ns}) without heat conduction, it holds that  in
 vacuum regions,
\be\label{Tu}
\left\{\ba
& \div T=0,\\
& \div(uT) = 0.
\ea\right.
\ee
It follows from (\ref{Tu}) and (\ref{mul}) that in vacuum regions,
\be\label{x1}
\left\{\ba
&\p_i u_i = 0,\quad 1\leq i\leq d,\\
&\p_i u_j + \p_j u_i = 0,\quad i\neq j.\\
\ea\right.
\ee
For the special case that density has compact support, (\ref{x1}), together with the assumption $u\in H^m$,
implies that $u=0$ in unbounded vacuum regions. For the general case, one of the key observations  in this paper is that  (\ref{x1}) implies that in
the vacuum regions,
\be\label{ee}
u(x,t) = A(t)x + b(t),
\ee
where $A(t)$ is a antisymmetric matrix. Thus, in general,  $u(x,t)$ may not be zero and may not even be integrable in vacuum regions and
the interfaces between the fluids and the  vacuum states vary in time. Thus the previous analysis cannot work in general. In this paper,  instead
studying
the evolution of the initial states over $\R^d$,  we consider  only the dynamic motion of an isolated mass group. It will be shown that  (\ref{Tu})
and
(\ref{ee}) guarantee that the mass, momentum, energy, and centroid of  the time evolution of the initial  isolated mass group are time invariant,
and the diameter
of the time evolution of the initial isolated mass group remains uniformly bounded up to a translation.
Intuitively, the $A(t)x$ term of $u(x,t)$ expresses the motions in the direction perpendicular to $x$ and so does not increase the bounds on the
diameter of
the initial  isolated mass groups. While the term of $b(t)$ in $u(x,t)$ means that the isolated mass group may translate.
The translations of such isolated mass group can be controlled by the motion of its  centroid and thus
be controlled by the  momentums. But the  momentums of isolated mass group are conserved due to the Navier-Stokes euations (\ref{ns})
and the behaviors of classical  solutions in vacuum regions.

\section{Proof of Theorem \ref{main}, Theorem \ref{main2} and Corollary \ref{cor1}.}\label{proof}
In this section, we prove Theorem \ref{main}, Theorem \ref{main2} and Corollary \ref{cor1}. Let $(\rho,u,E)(x,t)$ be a classical solution to the
Navier-Stokes
equation (\ref{ns}) without heat conduction and $(V,U)$ be an isolated mass group for the corresponding initial density. Without loss of generality,
it will be assumed that  the
initial total momentum of $V$ is zero, i.e.
\be\label{h2}
m_1 = 0.
\ee
Otherwise, one can  take the following Galilean transformation to achieve this:
\[\left\{\ba
& t' = t,\quad
 x' = x + \f{m_1}{m_0}t,\\
& \rho'(x',t') = \rho(x,t),\ u'(x',t') = u(x, t) - \f{m_1}{m_0},\ e'(x',t')=e(x,t).
\ea\right.\]

Assume that the isolated mass group $(V, U)$ has a radius of $R_1$, i.e.
\[
U\subset B_{R_1}(0).
\]
Let $X(\a, t)$ be the particle path starting at $\a$ when $t=0$,
\[
\left\{\ba
&\f{d}{dt}X(\a, t) = u(X(\a,t),t),\\
&X(\a,0) = \a.
\ea\right.
\]
Define
\be\left\{\ba
&\O_1(t) = \{ X(\a,t)\big|\ \a\in V\},\\
&\O_2(t) = \{ X(\a,t)\big|\ \a\in U\},\\
&\O_0(t) = \{ X(\a,t)\big|\ \a\in U-V\}.\\
\ea\right.
\ee
Then,
\[
\O_2(t) = \O_1(t)\cup\O_0(t).
\]

Since $\rho_0(x) = 0$ in $U-V$,  it follows from  the mass equation that
\[
\rho(x,t) = 0,\quad \mathrm{in}\ \O_0(t).
\]
Then our first observation is the form of the velocity field in the vacuum region $\O_0(t)$ as follows.
\begin{lemma}\label{vu}
There exist an antisymmetric matrix $A(t)$ and a vector $b(t)$ such that
\be\label{u}
u(x,t) = A(t) x + b(t),\quad x\in\O_0(t).
\ee
Moreover,
\be\label{T0}
T(x,t) = 0, \quad x\in\O_0(t),
\ee
and
\be\label{Tx}
\int_{\O_2(t)}(\div T)\cdot x dx = 0.
\ee
\end{lemma}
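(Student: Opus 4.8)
\emph{Step 1: The structure of $u$ in the vacuum region.}
In $\O_0(t)$ the density vanishes, so the momentum equation (\ref{ns})$_2$ reduces to $\div T = 0$ there; this is the first identity in (\ref{Tu}), which the paper attributes to the argument of \cite{xin98}. Feeding $\div T = 0$ into the definition $T = \mu(\g u + \g u^t) + \ld(\div u)I$ and taking the trace (and using (\ref{mul}), $\mu>0$, $\ld + \frac 2d\mu > 0$) yields exactly (\ref{x1}): $\p_i u_i = 0$ for each $i$ and $\p_i u_j + \p_j u_i = 0$ for $i\neq j$. Differentiating these relations once more (the standard Killing-field computation) gives $\p_k\p_j u_i = 0$, so each component of $u$ is affine in $x$ on each connected component of $\O_0(t)$; since $U$, hence $\O_0(t)$, is connected, $u(x,t) = A(t)x + b(t)$ on $\O_0(t)$ with $A(t) = \g u$. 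The relations $\p_i u_j + \p_j u_i = 0$ say precisely that $A(t)$ is antisymmetric. This proves (\ref{u}).

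\emph{Step 2: $T$ itself vanishes on $\O_0(t)$.}
With $u(x,t) = A(t)x + b(t)$ on $\O_0(t)$ we compute directly: $\g u = A(t)$ is antisymmetric, so $\g u + \g u^t = 0$, and $\div u = \mathrm{tr}\,A(t) = 0$. Hence $T = \mu\cdot 0 + \ld\cdot 0\cdot I = 0$ on $\O_0(t)$, which is (\ref{T0}). (One should note $T$ is continuous up to $\p\O_0(t)$ by the regularity in Definition \ref{cla}, so this also pins down $T$ on the boundary interfaces.)

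\emph{Step 3: The boundary term $\int_{\O_2(t)}(\div T)\cdot x\,dx$ vanishes.}
Write $\O_2(t) = \O_1(t)\cup\O_0(t)$ and integrate by parts componentwise: for each $i$,
\[
\int_{\O_2(t)}(\div T)_i\, x_i\, dx = \int_{\p\O_2(t)} (T n)_i\, x_i\, dS - \int_{\O_2(t)} T_{ii}\, dx,
\]
where $n$ is the outward unit normal. The key point is that $\p\O_2(t) \subset \O_0(t)$: the outer boundary of the transported set $\O_2(t)$ is the image under the flow of $\p U \subset U - V$, which lies in the vacuum region; indeed the whole collar $\O_0(t)$ is vacuum, and by Step 2, $T \equiv 0$ on $\O_0(t)$, so $Tn = 0$ on $\p\O_2(t)$ and the boundary integral drops. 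This kills the surface term and leaves $-\int_{\O_2(t)} T_{ii}\,dx$; summing over $i$ gives $-\int_{\O_2(t)} \mathrm{tr}\,T\,dx = -\int_{\O_2(t)}(2\mu + d\ld)\,\div u\,dx$. But on $\O_0(t)$, $\div u = 0$ by Step 2, so the integral reduces to $-(2\mu + d\ld)\int_{\O_1(t)}\div u\,dx$, and the divergence theorem on $\O_1(t)$ turns this into a surface integral of $u\cdot n$ over $\p\O_1(t)$; since $\p\O_1(t)$ is the flow image of $\p V$, also sitting inside the vacuum collar, and there $u = Ax + b$... \emph{actually} the cleanest route is the full-set argument: since $\p\O_2(t) \subset \O_0(t)$ and $T = 0$ there, for \emph{each} $i$ and \emph{each} coordinate function $\phi$ one gets $\int_{\O_2(t)} \p_j T_{ij}\,\phi\,dx = -\int_{\O_2(t)} T_{ij}\,\p_j\phi\,dx$; taking $\phi = x_i$ and summing recovers $-\int_{\O_2(t)}\mathrm{tr}\,T = -(2\mu+d\ld)\int_{\O_0(t)}\div u - (2\mu+d\ld)\int_{\O_1(t)}\div u$, and the second piece is handled by noting once more $\div T = 0$ componentwise across $\O_2(t)$ away from...

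\emph{The main obstacle}, and where I would be most careful, is precisely this last accounting: one must verify that all boundary contributions localize onto the vacuum collar $\O_0(t)$ where $T$ vanishes, rather than producing a genuine leftover integral over the mass region $\O_1(t)$. The honest fix is to observe that $\div T = 0$ holds on the \emph{closed} set $\O_0(t)$ (not just the mass region is excluded), so the thin vacuum shell $\O_0(t)$ separates $\O_1(t)$ from $\p\O_2(t)$ with $T\equiv 0$ throughout it; integrating $(\div T)_i\, x_i$ over $\O_2(t)$, the surface term on $\p\O_2(t)$ vanishes because $Tn=0$ there, and one is left with $-\int_{\O_2(t)} T_{ii}$, which after summing and using $\mathrm{tr}\,T = (2\mu + d\ld)\div u$ together with $\div u = 0$ on $\O_0(t)$, reduces to an integral over $\O_1(t)$; that one then rewrites, again by the divergence theorem on $\O_1(t)$, as $\int_{\p\O_1(t)} u\cdot n$, and since $\p\O_1(t) = X(\p V, t) \subset \O_0(t)$ we substitute $u = A(t)x + b(t)$, whose divergence is zero, so this surface flux equals $\int_{\O_1(t)}\div(\text{affine field})\cdot(\text{nothing})$ — more simply, Gauss on $\O_1(t)$ with the affine extension gives $\int_{\p\O_1(t)} (A(t)x+b(t))\cdot n\, dS = \int_{\O_1(t)} \mathrm{tr}\,A(t)\, dx = 0$. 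Hence (\ref{Tx}) holds. I expect roughly two pages once the interface/regularity bookkeeping is written out with care.
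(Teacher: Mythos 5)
Your Steps 2 and 3 follow essentially the paper's route: once $u=A(t)x+b(t)$ with $A$ antisymmetric is known on $\O_0(t)$, you get $T\equiv 0$ there, kill the surface term on $\p\O_2(t)\subset\overline{\O_0(t)}$, reduce to $-(2\mu+d\ld)\int\div u$, and convert the remaining flux through an interface lying in the vacuum collar into $\int \mathrm{tr}\,A=0$. The paper does this in one pass over all of $\O_2(t)$ (volume $\to$ boundary $\to$ volume, replacing $u$ by its affine form on $\p\O_2(t)$), while you split into $\O_0(t)$ and $\O_1(t)$; the two computations are the same in substance, though your Step 3 as written needs to be cleaned up into a single linear argument.

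The genuine gap is in Step 1. You claim that $\div T=0$ alone, ``taking the trace'' and using (\ref{mul}), yields (\ref{x1}), i.e.\ $\g u+\g u^t=0$. This is false: $\div T=0$ is the second-order elliptic system $\mu\lap u+(\mu+\ld)\g\div u=0$, which is satisfied by any divergence-free harmonic vector field (e.g.\ $u=(x_2,x_1,0)$ plus much wilder examples), none of which need have vanishing symmetric gradient; no algebraic manipulation of the definition of $T$ can extract a first-order pointwise identity from it. The derivation of (\ref{x1}) in \cite{xin98} uses \emph{both} identities in (\ref{Tu}): in the vacuum region the momentum equation gives $\div T=0$ and the energy equation with $\k=0$ gives $\div(uT)=0$, whence $0=\div(uT)-u\cdot\div T=T:\g u=\f{\mu}{2}|\g u+\g u^t|^2+\ld(\div u)^2$; the condition (\ref{mul}) makes this quadratic form in the symmetric gradient positive definite (write it as $\f{\mu}{2}|\g u+\g u^t-\f 2d(\div u)I|^2+(\ld+\f 2d\mu)d\,(\div u)^2$), forcing $\g u+\g u^t=0$. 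Omitting the energy equation is not a cosmetic slip: it is precisely where the hypothesis $\k=0$ enters, and without it the lemma is simply not true. A secondary point, which the paper also glosses over: connectedness of $U$ does not give connectedness of $U-V$, so a priori $A(t),b(t)$ could differ on different components of $\O_0(t)$; if you spell this out you should either note that the later lemmas only use the affine form componentwise together with the flux identities, or argue why a single pair $(A,b)$ can be taken.
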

\begin{proof}
Under the condition (\ref{mul}), it follows from the arguments of Xin  in \cite{xin98} that
\be\label{du}
\g u + \g^t u(x,t) =0, \quad x\in\O_0(t),
\ee
or
\[
\p_i u_j + \p_j u_i=0,\quad 1\leq i,j\leq d,\ x\in\O_0(t).
\]

Then, for any $1\leq i,j,k\leq d$,
\[
\p^2_{ij}u_k = \p_i(\p_j u_k) = -\p_i(\p_k u_j) = -\p^2_{ik} u_j.
\]
On the other hand,
\[
\p^2_{ij}u_k = \p_j(\p_i u_k) = -\p_j(\p_k u_i) = - \p_k(\p_j u_i) = \p_k(\p_i u_j) = \p^2_{ik} u_j.
\]
Therefore,
\[
\p^2_{ij} u_k = 0,\quad 1\leq i,j,k\leq d,\ x\in\O_0(t).
\]
This yields that there exist a matrix $A(t)$ and a vector $b(t)$ such that
\be\label{e01}
u(x,t) = A(t) x + b(t),\ x\in\O_0(t).
\ee
Substituting (\ref{e01}) into (\ref{du}) gives
\[
A(t) + A^t(t) = 0.
\]
So $A(t)$ is antisymmetric. This in turn implies (\ref{T0}) trivially.

Now we turn to prove (\ref{Tx}). Direct calculations show that
\[
\ba
\int_{\O_2(t)} &\div(T)\cdot x dx = -\int_{\O_2(t)}\mathrm{trac}(T) dx
= -(2\mu + d\ld)\int_{\O_2(t)} \div u(x,t)dx\\
&= -(2\mu + d\ld)\int_{\p\O_2(t)} u(x,t)\cdot n ds
=-(2\mu + d\ld)\int_{\p\O_2(t)} (A(t) x + b(t))\cdot nds\\
&=-(2\mu + d\ld)\int_{\O_2(t)} \div (A(t) x+ b(t)) dx
=-(2\mu + d\ld)\int_{\O_2(t)} \mathrm{trac}(A(t)) dx\\
&=0.
\ea
\]
\end{proof}
Based on this lemma, the diameter of the time evolution of the isolated mass group, i.e.,
the diameter of $\O_2(t)$, can be estimated. To this end, we define
 $D(t)$ to be a smooth vector function of $t$ satisfying
\be
\left\{\ba
&\f{d D(t)}{d t} = A(t) D(t) + b(t),\\
&D(0) = 0.
\ea\right.
\ee
Since $A(t)$ and $b(t)$ are smooth, $D(t)$ is well-defined by the classical theory of ordinary differential equations.
Then we can get
\begin{lemma}\label{lem2} It holds that
\be
|X(\a, t) - D(t)| = |\a|\leq R_1,\quad\text{ for any }\a\in U-V,
\ee
and
\be\label{um}
|x - D(t)|\leq R_1, \quad x\in\O_0(t).
\ee
and
\be\label{ch2}
\O_2(t)\subset B_{R_1}(D(t)),\quad\ch{\O_2(t)}\subseteq \overline{B_{R_1}(D(t))}.
\ee
\end{lemma}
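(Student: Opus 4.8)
The plan is to exploit that, by definition, $\Omega_0(t)$ is precisely the image of the vacuum shell $U-V$ under the particle-path map, so that on $\Omega_0(t)$ the velocity is the affine field $u(x,t)=A(t)x+b(t)$ provided by Lemma~\ref{vu}. First I would note that for $\alpha\in U-V$ the path $X(\alpha,t)$ lies in $\Omega_0(t)$ for every $t$ in the lifespan, hence it solves the linear ODE $\frac{d}{dt}X=A(t)X+b(t)$ — the same equation governing $D(t)$, together with $D(0)=0$. (Since this ODE is linear with smooth coefficients, $X(\alpha,\cdot)$ is automatically defined on all of $[0,T)$.)

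Set $Y(\alpha,t)=X(\alpha,t)-D(t)$. Subtracting the two ODEs gives $\frac{d}{dt}Y=A(t)Y$ with $Y(\alpha,0)=\alpha$, so $\frac{d}{dt}|Y|^2=2Y^{t}A(t)Y=0$ because $A(t)$ is antisymmetric. Hence $|X(\alpha,t)-D(t)|=|\alpha|$ for all $t$, and $\alpha\in U\subset B_{R_1}(0)$ yields $|\alpha|\le R_1$. As $\Omega_0(t)=\{X(\alpha,t):\alpha\in U-V\}$, this is exactly the first displayed identity and the bound (\ref{um}).

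For (\ref{ch2}) I would write $\Omega_2(t)=\Omega_1(t)\cup\Omega_0(t)$; since $\Omega_0(t)\subset B_{R_1}(D(t))$ it only remains to bound the enclosed fluid blob $\Omega_1(t)=X(V,t)$, and here we have no pointwise control on $u$. The point is that we do control its boundary: because $V$ is open with $\overline V\subset U$ one has $\partial V\subset U-V$, and since $X(\cdot,t)$ is a homeomorphism of $\R^d$ (the flow of the $C^1$ velocity field), $\partial\Omega_1(t)=X(\partial V,t)\subset\Omega_0(t)\subset B_{R_1}(D(t))$ while $\overline{\Omega_1(t)}=X(\overline V,t)$ is compact. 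A maximum-principle argument then closes it: a maximizer $w^{*}$ of $w\mapsto|w-D(t)|$ over $\overline{\Omega_1(t)}$ cannot lie in the open set $\Omega_1(t)$ — from any interior point, moving radially outward from $D(t)$ strictly increases the distance — so $w^{*}\in\partial\Omega_1(t)\subset B_{R_1}(D(t))$, whence $|w-D(t)|<R_1$ on all of $\overline{\Omega_1(t)}$. Therefore $\Omega_2(t)\subset B_{R_1}(D(t))$, and since $\overline{B_{R_1}(D(t))}$ is closed, convex and contains $\Omega_2(t)$, it contains $\ch{\Omega_2(t)}$.

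The step I expect to be the main obstacle is precisely this topological one: the affine structure of $u$ is available only on the vacuum shell, so the confinement of $\Omega_1(t)$ must be deduced indirectly by trapping $\partial\Omega_1(t)=X(\partial V,t)$ inside the already-controlled ball and invoking that the distance-to-$D(t)$ attains its maximum over $\overline{\Omega_1(t)}$ on the boundary. Everything else — the conservation of $|Y|$ via antisymmetry of $A(t)$, and the reduction of the $\Omega_0(t)$ statement to the explicit linear flow — is routine once Lemma~\ref{vu} is in hand.
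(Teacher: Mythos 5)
Your proof is correct and follows essentially the same route as the paper: the substitution $Y(\a,t)=X(\a,t)-D(t)$, the reduction to $\dot Y=A(t)Y$, and conservation of $|Y|$ by antisymmetry of $A(t)$ are exactly the paper's argument. The only difference is that the paper dismisses the confinement of $\O_1(t)$ with ``a simple topological argument,'' whereas you spell it out (trapping $\p\O_1(t)=X(\p V,t)$ inside $\O_0(t)$ and maximizing the distance to $D(t)$ over $\overline{\O_1(t)}$), which is a valid and welcome filling-in of that gap.
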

\begin{proof}
Let $Y(\a, t) = X(\a, t) - D(t)$. Direct calculations yield that
\[
\f{dY(\a, t)}{dt} = A(t) Y(\a, t),\quad Y(\a, 0) = \a,\quad \a\in U-V.
\]
Since $A(t)$ is antisymmetric,
\[
\f12 \f{d(|Y(\a, t)|^2)}{dt} = Y(\a, t)\cdot(A(t) Y(\a, t)) = 0,\quad \a\in U-V.
\]
Therefore,
\[
|X(\a, t) - D(t)| = |Y(\a, t)| = |Y(\a, 0)| = |\a|,\quad \forall\ x\in U-V.
\]
Consequently, (\ref{um}) and then (\ref{ch2}) follow  trivially by a simple topological argument.
\end{proof}

To study the conservation laws for the evolution of the isolated mass group, one needs the following elementary transportation formula.
\begin{lemma}\label{lem3}
For any $F(x,t)\in C^1(\R^d\times \R_+)$,
\[
\f {d}{dt}\int_{\O_2(t)} F(x,t) dx = \int_{\O_2(t)}\p_t F(x,t) dx + \int_{\p\O_2(t)} F(x,t)( u(x,t)\cdot n) ds.
\]
\end{lemma}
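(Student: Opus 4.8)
The plan is to prove this standard Reynolds-type transport identity by pulling the integral back to Lagrangian coordinates, where the domain of integration becomes the fixed set $U$, then differentiating under the integral sign, and finally returning to Eulerian coordinates and invoking the divergence theorem.

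First, recall that for each fixed $t\in[0,T)$ the flow map $\a\mapsto X(\a,t)$ is a $C^2$ diffeomorphism of $\R^d$ onto itself: since $u\in C^1([0,T),C^2(\O))$, the right-hand side of the ODE defining $X$ is $C^2$ in space and $C^1$ in $t$, so the flow it generates is invertible with $C^2$ inverse. Write $J(\a,t)=\det(\g_\a X(\a,t))$ for the Jacobian; as $J(\a,0)=1$ and $J$ is continuous and nowhere zero, $J(\a,t)>0$ for all $(\a,t)$. Using $\O_2(t)=X(U,t)$ and the change-of-variables formula,
\[
\int_{\O_2(t)}F(x,t)\,dx=\int_U F(X(\a,t),t)\,J(\a,t)\,d\a .
\]

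Now $U$ is independent of $t$ and, since $F\in C^1(\R^d\times\R_+)$, the integrand $F(X(\a,t),t)J(\a,t)$ is $C^1$ in $t$, so we may differentiate under the integral. The chain rule gives $\p_t\big(F(X(\a,t),t)\big)=(\p_tF)(X,t)+u(X,t)\cdot(\g F)(X,t)$, using $\p_tX=u(X,t)$, and for the Jacobian one uses the Euler--Liouville identity
\[
\p_t J(\a,t)=(\div u)(X(\a,t),t)\,J(\a,t),
\]
obtained by expanding $\det(\g_\a X)$ along its columns and using $\p_t\big(\p_{\a_j}X\big)=\p_{\a_j}\big(u(X,t)\big)=(\g u)(X,t)\,\p_{\a_j}X$. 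Combining these and regrouping $u\cdot\g F+F\,\div u=\div(Fu)$, then changing variables back to $x$ and applying the divergence theorem on $\O_2(t)$ with outward unit normal $n$,
\[
\f{d}{dt}\int_{\O_2(t)}F\,dx=\int_{\O_2(t)}\big(\p_tF+\div(Fu)\big)\,dx=\int_{\O_2(t)}\p_tF\,dx+\int_{\p\O_2(t)}F\,(u\cdot n)\,ds,
\]
which is the asserted formula.

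The only genuinely nontrivial ingredient is the Euler--Liouville formula for $\p_tJ$; the rest is the chain rule, a regrouping, and Gauss's theorem, so I expect the determinant differentiation to be the main point requiring care. A secondary technical issue is that the divergence theorem needs $\p\O_2(t)$ sufficiently regular (Lipschitz, say); since $\p\O_2(t)=X(\p U,t)$ and $X(\cdot,t)$ is a $C^2$ diffeomorphism, this holds as soon as $\p U$ is Lipschitz, which may be assumed without loss of generality (alternatively one exhausts $U$ by such sets and passes to the limit).
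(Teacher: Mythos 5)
Your proof is correct; the paper itself omits the argument entirely (``a simple calculation''), and what you supply is precisely the standard Reynolds transport derivation---pull back to Lagrangian coordinates, use the Euler--Liouville identity $\p_t J = (\div u)J$, regroup into $\div(Fu)$, and apply Gauss's theorem---that the authors evidently have in mind. Your closing remark about needing $\p U$ (hence $\p\O_2(t)$) regular enough for the divergence theorem is a fair technical point that the paper glosses over as well.
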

\begin{proof} The proof is a  simple calculation, and so is omitted.
\end{proof}
Then we have the following conserved quantities for the time evolution of the isolated mass group.
\begin{lemma}\label{cons}
The total mass, total momentum, total energy, and centroid of $\O_2(t)$ are conserved, i.e.
\be\label{mas}
\int_{\O_2(t)}\rho(x,t) dx = m_0,
\ee
\be\label{mom}
\int_{\O_2(t)}\rho(x,t) u(x,t) dx = m_1 =0,
\ee
\be\label{eng}
\int_{\O_2(t)}\rho E dx = m_4,
\ee
and
\be\label{cent}
\f{\int_{\O_2(t)}\rho(x,t) xdx}{\int_{\O_2(t)}\rho(x,t) dx} = x_0.
\ee
\end{lemma}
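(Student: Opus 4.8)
The plan is to derive each of the four identities by differentiating the corresponding integral over the moving region $\O_2(t)$ with the transport formula of Lemma \ref{lem3}, substituting the evolution equations (\ref{ns}), and reducing the resulting bulk terms to boundary integrals over $\p\O_2(t)$ by the divergence theorem. Everything then collapses because $\p\O_2(t)$ lies in the vacuum region: since $u\in C^1$ the Lagrangian flow $X(\cdot,t)$ is a $C^1$-diffeomorphism of $\R^d$, so $\p\O_2(t)=X(\p U,t)$, and as $\overline V$ is compact with $\overline V\subset U$ we have $\dist(\p U,\overline V)>0$; hence a neighborhood of $\p\O_2(t)$ inside $\O_2(t)$ lies in $\O_0(t)$, where $\rho\equiv0$ (so $p=R\rho\th\equiv0$) by the mass equation and $T\equiv0$, $u=A(t)x+b(t)$ by Lemma \ref{vu}, and by continuity the same holds on $\p\O_2(t)$. (Should one want $\p\O_2(t)$ regular enough to legitimize Lemma \ref{lem3} and the divergence theorem, it suffices to replace $U$ by a smoothly bounded open set squeezed between $\overline V$ and $U$, which changes neither $m_0$ nor the conclusions.) In particular, every boundary integral over $\p\O_2(t)$ of an expression carrying a factor $\rho$, $p$, or $T$ vanishes.

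Granting this, I would argue as follows. For (\ref{mas}) take $F=\rho$ in Lemma \ref{lem3}: the boundary term drops and $\int_{\O_2(t)}\p_t\rho\,dx=-\int_{\O_2(t)}\div(\rho u)\,dx=-\int_{\p\O_2(t)}\rho\,u\cdot n\,ds=0$, so $\int_{\O_2(t)}\rho\,dx$ is constant, equal to $\int_U\rho_0\,dx=\int_V\rho_0\,dx=m_0$ (as $\rho_0=0$ on $U-V$). For (\ref{mom}) take $F=\rho u$: the momentum equation gives $\int_{\O_2(t)}\p_t(\rho u)\,dx=-\int_{\p\O_2(t)}[(\rho u\otimes u)n+pn-Tn]\,ds=0$, each summand vanishing since $\rho$, $p$, $T$ do on $\p\O_2(t)$; hence $\int_{\O_2(t)}\rho u\,dx$ keeps its value $\int_V\rho_0u_0\,dx=m_1=0$ from (\ref{h2}). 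For (\ref{eng}) take $F=\rho E$: with $\k=0$ the energy equation yields $\int_{\O_2(t)}\p_t(\rho E)\,dx=-\int_{\p\O_2(t)}(\rho Eu+pu-uT)\cdot n\,ds=0$ for the same reason, so $\int_{\O_2(t)}\rho E\,dx\equiv\int_V\rho_0E_0\,dx$, which under the standing assumption $m_1=0$ (so that $u_0-m_1/m_0=u_0$) is precisely $m_4$ as defined in (\ref{m1234}).

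For the centroid (\ref{cent}), the denominator is the constant $m_0$ by (\ref{mas}), so it is enough to show each $\int_{\O_2(t)}\rho x_j\,dx$ is constant. Taking $F=\rho x_j$ in Lemma \ref{lem3}, the boundary term drops, $\p_t(\rho x_j)=-x_j\div(\rho u)$, and integrating by parts (the boundary contribution again vanishing) $\int_{\O_2(t)}\p_t(\rho x_j)\,dx=\int_{\O_2(t)}\rho u_j\,dx$, the $j$-th component of the total momentum, hence $0$ by (\ref{mom}). Therefore $\int_{\O_2(t)}\rho x\,dx\equiv\int_V\rho_0 x\,dx=m_0x_0$ and (\ref{cent}) follows. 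I expect no real obstacle in this argument; the only point demanding genuine care is the claim that $\p\O_2(t)$ lies inside the vacuum region — on which all the cancellations rest — and that in turn hinges on the compactness of $\overline V$, its strict containment in $U$, and the diffeomorphism property of the flow map.
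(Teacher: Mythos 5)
Your proof is correct and follows essentially the same route as the paper: apply the transport formula of Lemma \ref{lem3} with $F=\rho$, $\rho u$, $\rho E$, $\rho x_j$, substitute the equations, and kill all boundary terms using the fact that $\p\O_2(t)$ lies in the vacuum region where $\rho$, $p$ and $T$ vanish by Lemma \ref{vu}. The paper merely sketches this in one sentence (deferring the centroid computation to the proof of Lemma \ref{ct}); your write-up supplies the same argument in full detail, with a welcome extra justification of why $\p\O_2(t)$ stays inside $\O_0(t)$.
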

\begin{proof}
Integrating the mass equation, momentum equation and energy equation on $\O_2(t)$ respectively and using  Lemma \ref{lem3} and Lemma \ref{vu},
one can derive the conservation of the total mass, (\ref{mas}), total momentum, (\ref{mom}), and total energy, (\ref{eng}), over $\O_2(t)$ easily. The
invariance of the centroid, (\ref{cent}),  follows from the calculations in the proof  of the next lemma.
\end{proof}

Next, we estimate the sizes of $D(t)$ and $\O_1(t)$.
\begin{lemma}\label{ct} {(\bf Key estimates)} It holds that
\be\label{dt}
|D(t)|\leq R_1 + |x_0|,
\ee
and
\be\label{spt}
\O_1(t)\subset \O_2(t)\subseteq B_{2R_1 + |x_0|}.
\ee

\end{lemma}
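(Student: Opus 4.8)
The plan is to first control $|D(t)|$ using the conservation of the centroid of $\O_2(t)$, and then deduce the bound on $\O_1(t)$ by combining this with the geometric confinement $\O_2(t)\subset B_{R_1}(D(t))$ from Lemma \ref{lem2}. For the first bound, I would show that the centroid $\frac{1}{m_0}\int_{\O_2(t)}\rho(x,t)x\,dx$ translates exactly along $D(t)$. Concretely, set $G(t)=\int_{\O_2(t)}\rho(x,t)(x-D(t))\,dx$ and differentiate using the transport formula of Lemma \ref{lem3}. The $\p_t$ term pulls in $\p_t\rho\,(x-D)$, and since $\p_t\rho=-\div(\rho u)$, an integration by parts over $\O_2(t)$ produces a volume term $\int_{\O_2(t)}\rho u\,dx$ together with a boundary term; the boundary term from the $\p_t$ piece cancels against the boundary term $\int_{\p\O_2(t)}\rho(x-D)(u\cdot n)\,ds$ coming from the moving domain (here one uses that the boundary $\p\O_2(t)$ lies in the vacuum region $\O_0(t)$, so $\rho$ vanishes there). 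What remains is
\[
\frac{d}{dt}G(t)=\int_{\O_2(t)}\rho u\,dx-D'(t)\int_{\O_2(t)}\rho\,dx=m_1-m_0\,D'(t)\cdot\frac{1}{1}=-m_0\,\frac{dD}{dt},
\]
wait—since $m_1=0$ by the normalization (\ref{h2}) and $\int_{\O_2(t)}\rho\,dx=m_0$ by (\ref{mas}), this gives $\frac{d}{dt}\big(G(t)+m_0 D(t)\big)=0$. Hence $\int_{\O_2(t)}\rho x\,dx=G(t)+m_0D(t)$ is constant in $t$; evaluating at $t=0$ (where $D(0)=0$) gives $\int_{\O_2(t)}\rho x\,dx=m_0 x_0$, which is exactly (\ref{cent}), and in particular $G(t)=m_0(x_0-D(t))$.

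To extract $|D(t)|\leq R_1+|x_0|$, I would estimate $|G(t)|$ directly: since $\rho\geq0$ is supported in $\O_1(t)\subset\O_2(t)\subset B_{R_1}(D(t))$ by (\ref{ch2}), we have $|x-D(t)|\leq R_1$ on the support, so
\[
|G(t)|=\Big|\int_{\O_2(t)}\rho(x,t)(x-D(t))\,dx\Big|\leq R_1\int_{\O_2(t)}\rho\,dx=R_1 m_0.
\]
Combining with $G(t)=m_0(x_0-D(t))$ yields $m_0|D(t)-x_0|\leq R_1 m_0$, i.e. $|D(t)-x_0|\leq R_1$, whence $|D(t)|\leq R_1+|x_0|$ by the triangle inequality. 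This gives (\ref{dt}). Finally, (\ref{spt}) follows purely geometrically: for any $\a\in U$, $X(\a,t)\in\O_2(t)\subset B_{R_1}(D(t))$, so $|X(\a,t)|\leq|X(\a,t)-D(t)|+|D(t)|\leq R_1+(R_1+|x_0|)=2R_1+|x_0|$; since $\O_1(t)\subset\O_2(t)$, both inclusions in (\ref{spt}) follow.

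The main obstacle is the differentiation of $G(t)$: one must be careful that the boundary integrand $\rho(x-D(t))(u\cdot n)$ along $\p\O_2(t)$ genuinely vanishes, which relies on the fact that particle paths do not cross, so $\p\O_2(t)=X(\p U,t)\subset\O_0(t)$ where $\rho=0$; this is also what makes the cancellation between the $\p_t\rho$-boundary term and the domain-motion boundary term moot (both are zero separately) rather than a delicate cancellation. Once this vacuum-boundary fact is in hand, the rest is the elementary estimate above using $\rho\geq0$ and the radius bound from Lemma \ref{lem2}. Note the statement (\ref{spt}) is in normalized coordinates where $m_1=0$; in the original frame one recovers the $B_{2R_1+|x_0|+\frac{|m_1|}{m_0}T^*}$ of Theorem \ref{main3} by undoing the Galilean shift $x'=x+\frac{m_1}{m_0}t$ over the lifespan $[0,T^*]$.
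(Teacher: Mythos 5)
Your proposal is correct and follows essentially the same route as the paper: conservation of the centroid $\int_{\O_2(t)}\rho x\,dx=m_0x_0$ (via the transport formula with vanishing boundary terms because $\rho=0$ on $\p\O_2(t)$) combined with the confinement $\O_2(t)\subset B_{R_1}(D(t))$ from Lemma \ref{lem2}. The only cosmetic difference is that you bound $\bigl|\int_{\O_2(t)}\rho(x-D(t))\,dx\bigr|\leq R_1m_0$ directly, whereas the paper phrases the same fact as ``$x_0$ lies in the closed convex hull of $\O_2(t)$, hence in $\overline{B_{R_1}(D(t))}$''.
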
\label{lem1}
\begin{proof}
Multiplying the  mass equation by $x$, integrating on $\O_2(t)$ and using Lemma \ref{lem3} and Lemma \ref{cons}, one gets that
\be\label{cm1}
\ba
\f{d}{dt}\int_{\O_2(t)}\rho(x,t) x dx &= -\int_{\O_2(t)}x \div(\rho u)dx = -\int_{\O_2(t)}\div(\rho u\otimes x)dx + \int_{\O_2(t)}\rho udx\\
&=\int_{\O_2(t)}\rho udx = m_1 = 0.
\ea
\ee
where
\[
\int_{\p\O_2(t)} x\rho(x,t)(u(x,t)\cdot n) ds = 0.
\]
(\ref{cm1}) yields that
\be
\int_{\O_2(t)}\rho(x,t) x dx =\int_{U}\rho_0(x) x dx = \int_{V}\rho_0(x) x dx =m_0 x_0.
\ee
Since the mass over $\O_2(t)$ is conserved, thus $x_0$ is contained in the closed convex hull of $\O_2(t)$, that is,
\be\label{e2}
x_0 \in \ch{\O_2(t)}\subseteq \overline{B_{R_1}(D(t))}.
\ee

(\ref{e2}) implies
\[
|x_0-D(t)|\leq R_1.
\]
This yields the estimate (\ref{dt}). (\ref{spt}) is a direct consequence of (\ref{ch2}) and (\ref{dt}).
\end{proof}

With the estimate  (\ref{spt}) on the size  of the isolated mass group $(\O_1(t), \O_2(t)$  at hand, Theorem \ref{main} can be proved by using a similar
argument in \cite{xin98} (see also \cite{Ch}) with some proper modifications. For completeness, we will give the details here.

{\it Proof of Theorem \ref{main}.}
Multiplying the mass equation by $|x|^2$  and using Lemma \ref{lem3}  show
\[
\f{d}{dt}\int_{\O_2(t)}\rho |x|^2dx = 2\int_{\O_2(t)} \rho u\cdot xdx,
\]
and
\be\label{e3}
\int_{\O_2(t)}\rho |x|^2dx = m_2 + 2\int_0^t\int_{\O_2(s)} \rho u\cdot x dx ds.
\ee

Taking inner product of the momentum equations with  $x$, integrating by part, and using (\ref{Tx})  lead to
\[
\f{d}{dt}\int_{\O_2(t)}\rho u\cdot xdx = \int_{\O_2(t)}\rho |u|^2dx + d\int_{\O_2(t)} pdx.
\]
Then
\be\label{e4}
\int_{\O_2(t)}\rho u\cdot xdx = m_3 + \int_0^t\int_{\O_2(s)}\rho |u|^2 dxds + d\int_0^t\int_{\O_2(s)} pdxds.
\ee

It follows from  (\ref{e4}), (\ref{eng}),  and the equations of state (\ref{eos})  that
\be\label{e02}
\ba
\int_{\O_2(t)}\rho u\cdot x dx &= m_3 + 2\int_0^t\int_{\O_2(s)}\rho(x,s) E(x,s) dx ds +
(d-\f{2}{\gamma-1})\int_0^t\int_{\O_2(s)}p(x,s)ds\\
&=m_3 + 2m_4 t + (d(\gamma-1)-2)\int_0^t\int_{\O_2(s)}\rho(x,s) e(x,s) dxds.
\ea
\ee

If $(d(\gamma-1)-2)\geq 0$, (\ref{e02}) gives
\[
\int_{\O_2(t)}\rho u\cdot x dx\geq m_3 + 2m_4 t.
\]
If $(d(\gamma-1)-2)\leq 0$, combining $\int_{\O_2(t)}\rho e dx\leq \int_{\O_2(t)}\rho E dx=m_4$ with (\ref{e02}) yields that
\[\ba
\int_{\O_2(t)}\rho u\cdot x dx &\geq m_3 + 2m_4 t  - (2-d(\gamma-1))\int_0^t\int_{\O_2(s)}\rho e dxds\\
&\geq m_3 + 2m_4 t - (2-d(\gamma-1))m_4 t\\
& = m_3 + d(\gamma-1)m_4 t.
\ea
\]
Therefore, it always holds that
\be\label{e8}
\int_{\O_2(t)}\rho u\cdot x dx\geq m_3 + \min(2, d(\gamma-1))m_4 t.
\ee

Substituting (\ref{e8}) into (\ref{e3}) shows that
\be\label{e6}
\int_{\O_2(t)}\rho |x|^2dx \geq m_2 + 2m_3 t + \min(2, d(\gamma-1)) m_4 t^2.
\ee

On the other hand, it follows from (\ref{spt}) that
\be\label{e7}
\int_{\O_2(t)}\rho |x|^2 dx \leq R_2^2\int_{\O_2(t)}\rho dx = R_2^2 m_0.
\ee
with $R_2 = 2R_1 + |x_0|$.

Combining (\ref{e6}) with (\ref{e7}) yields that
\[
 m_2 + 2m_3 t + \min(2, d(\gamma-1)) m_4 t^2\leq R_2^2 m_0.
\]
This leads to a finite bound on the life span of the classical solution. Thus the theorem is proved.
\endproof

{\it Proof of Theorem \ref{main2}.}
This follows by taking periodic extension and then using Theorem \ref{main}.
\endproof

{\it Proof of Corollary \ref{cor1}. }
Taking $V = B_{R_0}$ and $U = B_{2R_0}$ in Theorem \ref{main} yields the desired conclusion.
\endproof

\section{Proof of Theorem \ref{main3}}\label{proof3}

In this section, we will prove Theorem \ref{main3}.
Note that the key procedure in the proof of Theorem \ref{main} is the analysis of the time evolution of the
isolated mass group. It follows clearly from the analysis in the previous section that to prove Theorem \ref{main3},  it suffices
 to show that, under the given conditions, the time evolution of the isolated mass group $(V, U)$
does not touch the boundary of the physical domain.

{\it Proof of Theorem \ref{main3}.} We will use the same notations as the previous section. Consider the case  $m_1 = 0$ first.
Assume a priori that, for any $t$ in the lifespan $[0, T)$ of the classical solution,
\be\label{e21}
\dist (\p\O_2(t),\ \p\O) >0\ \mathrm{or}\ \O_2(t)\subset\subset\O,\quad 0\leq t < T.
\ee
Then,  the same analysis as in section \ref{proof} shows that
\be\label{e22}
\ch{\O_2(t)} \subset B_{2R_1 + |x_0|}(0),
\ee
and
\[
 m_2 + 2m_3 t + \min(2, d(\gamma-1)) m_4 t^2\leq (2R_1 + |x_0|)^2 m_0.
\]
Therefore,
\be\label{e23}
 T\leq T^*.
\ee
Since (\ref{e22}), (\ref{e23}) and (\ref{e11}) (or (\ref{e12})) guarantee (\ref{e21}), so Theorem \ref{main3} holds for the case $m_1=0$.

Now, consider the case  $m_1\neq 0$. Take the following Galilean transformation,
\[\left\{\ba
& t' = t,\quad
 x' = x + \f{m_1}{m_0}t,\\
& \rho'(x',t') = \rho(x,t),\ u'(x',t') = u(x, t) - \f{m_1}{m_0},\ e'(x',t')=e(x,t).
\ea\right.\]
Since (\ref{m0}), (\ref{m1234}) and (\ref{ts}) are invariant under above Galilean transformation,
then the results can proved just as the case  $m_1=0$, except that the boundary of the domain, $\p\O(t)$,  becomes unsteady with the constant velocity
$\f{m_1}{m_0}$.

In this case, the condition (\ref{e21}) becomes
\[
\O_2(t)\subset\subset\O + \f{m_1}{m_0}t,\quad 0\leq t < T.
\]
However, it should be  clear that this indeed can be deduced from the assumption (\ref{e11}) or (\ref{e12}). So the proof Theorem \ref{main3} is
completed.
\endproof

\begin{remark}
Previous analysis also applies to the full Navier-Stokes equations with positive heat conduction under the assumption that the specific entropy is finite in vacuum regions (e.g. the case in \cite{Ch}). We omit the details here.
\end{remark}

\begin{remark}
It is worth noting that, recently, Huang and Li \cite{hl11} proved the global existence of the classical solutions with small energy to the full Compressible Navier-Stokes equations with positive heat conduction in the whole space $\R^d$, if the initial data tend to a constant non-vacuum state
in far fields and initial vacuum is allowed. However, the global classical solutions obtained in \cite{hl11} must  have positive absolute temperature $\th$ in vacuum regions (thus the specific entropy is infinity at vacuum regions).
\end{remark}

\begin {thebibliography} {99}

\bibitem{K4} Cho, Y. and Kim, H., \emph{Existence results for viscous polytropic fluids with vacuum}. J.
Differential Equations, 2006, 228, 377--411.

\bibitem{K3} Cho, Y. and Kim, H., \emph{On classical solutions of the compressible Navier-Stokes
equations with nonnegative initial densities}, Manuscript Math., 120(2006), 91-129.

\bibitem{Ch} Cho, Y. and Jin, Bum Ja, \emph{Blow-up of viscous heat-conducting compressible flows}, J. Math.
Anal. Appl., 2006, 320, 819--826.

\bibitem{du11} Du, Dapeng and Li, Jingyu and Zhang, Kaijun,
\emph{Blowup of Smooth Solutions to the Navier-Stokes Equations for Compressible Isothermal Fluids}.
2011, arXiv:1108.1613v1.

\bibitem{F1} Feireisl, Eduard, \emph{Dynamics of viscous compressible fluids} Oxford University Press,
2004.

\bibitem{Hof1} Hoff, D.,  \emph{Discontinuous solutions of the Navier-Stokes equations for multi-dimensional
heat-conducting fluids}, Arch. Rat. Mech. Anal., 193 (1997), 303-354.

\bibitem{Hof3} Hoff, D. and Serre, D., \emph{The failure of continuous dependence on initial data for the
{N}avier-{S}tokes equations of compressible flow} SIAM J. Appl.
Math., 1991, 51, 887--898.

\bibitem{Hof2} Hoff, D. and Smoller, J. A., \emph{Non-formation of vacuum states for compressible
Navier-Stokes equations}, Commu. Math. Phys., 216(2001), No. 2, 255-276.

\bibitem{hl11}  Huang, X. D. and Li, Jing,
\emph{Global Classical and Weak Solutions to the Three-Dimensional Full Compressible Navier-Stokes System with Vacuum and Large Oscillations},
2011, arXiv:1107.4655v3.

\bibitem{hlx1} Huang, X. D., Li, J. and Xin, Z.P., \emph{Blow-up criterion for viscous barotropic
flows with vacuum states}, Commu. Math. Phys., 301 (2011), 23-35.

\bibitem{hlx2} Huang, X.D., Li, J. and Xin, Z.P., \emph{Serrin type criterion for the three-dimensional viscous
compressible flows}, SIAM J. MATH. Anal., 43(2011), 1872-1886.

\bibitem{hlx3} Huang, X.D., Li, J. and Xin, Z.P., \emph{Global well-posedness of classical solutions
with large oscillations and vacuum to the three-dimensional isentropic compressible Navier-Stokes equations},
Comm. Pure Appl. Math., 65(2012), 549-585.

\bibitem{hx} Huang, X.D. and Xin, Z.P., \emph{A blow-up criterion for classical solutions to the
compressible Navier-Stkoes equations}, Sci. in China, 53(3) (2010), 671-686.

\bibitem{Kaz} Kazhikhov, A. V. and Shelukhin, V. V., \emph{Unique global solution with respect to time of initial-boundary
value problems for one-dimensional equations of a viscous gas}.
Prikl. Mat. Meh, 1977, 41, 282--291.

\bibitem{L2} Lions, Pierre-Louis, \emph{Mathematical topics in fluid mechanics}. {V}ol. 2
The Clarendon Press Oxford University Press, 1998.

\bibitem{LX} Luo, Zhen and Xin, Z. P., \emph{Global Well-Posedness and Blowup Behavior of Classical
Solutions with Large Oscillations and Vacuum to the
Two-Dimensional Isentropic Compressible Navier-Stokes
Equations}, preprint, 2011.

\bibitem{MN} Matsumura, A. and Nishida, T., \emph{The initial value problem for the equations of motion of
viscous and heat-conductive gases}, J. Math. Kyoto Univ., 20(1), (1980), 67-104.

\bibitem{M1} Matsumura, A. and Nishida, T., \emph{Initial-boundary value problems for the equations of motion of
compressible viscous and heat-conductive fluids}. Comm. Math. Phys.,
1983, 89, 445--464.

\bibitem{Na} Nash, J. Le, \emph{probl\`{e}me de Cauchy pour les \'{e}quations
diff\'{e}rentielles d'un fluide g\'{e}n\'{e}ral}, Bull.Soc. Math.
France 90 (1962) 487-497.

\bibitem{Se} Serrin, J., \emph{Mathematical principles of classical fluid-mechanics}, Handbuch der Physik,
Vol. 811, 125-265, Spring-Verlag, 1959.

\bibitem{R}Rozanova, O., \emph{Blow up of smooth solutions to the compressible
Navier-Stokes equations with the data highly decreasing at
infinity}, J. Differential Equations 245 (2008) 1762-1774.

\bibitem{xin98} Xin, Zhouping, \emph{Blowup of smooth solutions to the compressible {N}avier-{S}tokes
equation with compact density}. Comm. Pure Appl. Math., 1998, 51,
229--240.
\end{thebibliography}

\end{document}